\documentclass{amsart}

\usepackage{stmaryrd}
\usepackage{filecontents}
\usepackage{hyperref}

\newtheorem{theorem}{Theorem}[section]
\newtheorem{lemma}[theorem]{Lemma}

\theoremstyle{definition}

\theoremstyle{remark}

\numberwithin{equation}{section}

\newcommand{\C}{{\mathcal C}}
\newcommand{\D}{{\mathcal D}}

\newcommand{\M}{{\mathcal M}}
\newcommand{\R}{\mathcal{R}}
\newcommand{\NN}{\mathbb{N}}
\newcommand{\RR}{\mathbb{R}}
\newcommand{\ZZ}{\mathbb{Z}}

\newcommand{\abs}[1]{{\left|#1\right|}} 
\newcommand{\norm}[1]{{\left\|#1\right\|}}

\newcommand{\biggnorm}[1]{{\bigg\|#1\bigg\|}}

\begin{document}
\numberwithin{equation}{section}

\title[Absolutely summing operators and atomic decomposition]{Absolutely summing operators and atomic decomposition in bi-parameter Hardy spaces.}
\author{Paul F.X.~M\"uller \and Johanna Penteker}
\subjclass[2010]{42B30 46B25 46B09 46B42 46E40 47B10 60G42}

\address{P.F.X. M\"uller, Institute of Analysis, Johannes Kepler University Linz, Austria, 4040 Linz, Altenberger Strasse 69}
\email{paul.mueller@jku.at}
\address{J. Penteker, Institute of Analysis, Johannes Kepler University Linz, Austria, 4040 Linz, Altenberger Strasse 69}
\email{johanna.penteker@jku.at}
\date{\today}

\maketitle

\subsection*{Abstract}
For $f \in H^p(\delta^2)$, $0<p\leq 2$, with Haar expansion $f=\sum f_{I \times J}h_{I\times J}$ we constructively determine the Pietsch measure of the $2$-summing multiplication operator
	\[\mathcal{M}_f:\ell^{\infty} \rightarrow H^p(\delta^2), \quad (\varphi_{I\times J}) \mapsto \sum \varphi_{I\times J}f_{I \times J}h_{I \times J}.
\]
Our method yields a constructive proof of Pisier's decomposition of $f \in H^p(\delta^2)$
	\[|f|=|x|^{1-\theta}|y|^{\theta}\quad\quad \text{ and }\quad\quad \|x\|_{X_0}^{1-\theta}\|y\|^{\theta}_{H^2(\delta^2)}\leq C\|f\|_{H^p(\delta^2)},
\] where $X_0$ is Pisier's extrapolation lattice associated to $H^p(\delta^2)$ and $H^2(\delta^2)$. 
Our construction of the Pietsch measure for the multiplication operator $\mathcal{M}_f$ involves the Haar coefficients of $f$ and its atomic decomposition. We treated the one-parameter $H^p$-spaces in [{\em Houston Journal Math.}~2015].

\section{Introduction}
Let $Y_0,Y$ be Banach spaces. An operator $T\in L(Y_0,Y)$ is called \textit{2-summing} if there is a constant $C$ such that for every choice of finite sequences $(\varphi_i)$ in $Y_0$, we have
\begin{equation}
\label{eq:summing}
	\bigg(\sum_{i=1}^n{\norm{T\varphi_i}^2}\bigg)^{\frac{1}{2}}\leq C\,\sup{\bigg\{\Big(\sum_{i=1}^n{\abs{\varphi^*(\varphi_i)}}^2\Big)^{\frac{1}{2}}:\,\varphi^* \in B_{Y_0^*}\leq 1\bigg\}}. 
\end{equation}
In the early 70's the concepts of type and cotype were mainly developed by\\ J.~Hoffmann-J{\o}rgensen, S.~ Kwapien, B.~Maurey and G.~Pisier, see \cite{MR0356155,MR0341039,MR0346510,MR0308834,MR0331017,MR0443015,MR0333673}.
A Banach space $Y$ is called of \textit{cotype 2} if there is a constant $C$ such that for all finite sequences $(y_i)$ in $Y$ 
\begin{equation*}
\label{eq:cotype}
	\bigg(\sum_{i=1}^n\norm{y_i}^2\bigg)^{\frac{1}{2}}\leq C \bigg(\int_0^1{\biggnorm{\sum_{i=1}^n{r_i(t)y_i}}_X^2dt}\bigg)^{\frac{1}{2}},
\end{equation*}where $(r_i)_{i \in \NN}$ denotes the independent Rademacher system.
 One famous theorem due to Maurey (\cite{MR0399818, MR0344931}, see also \cite{MR0487403}) combining absolutely summing operators and the concept of cotype states that every bounded operator 
\[T\colon\ell^{\infty}\rightarrow Y\] is $2$-summing, whenever $Y$ is of cotype $2$.
In particular, if
\[\norm{T\varphi}_Y\leq \sup_{i \in \NN}\abs{\varphi_i},\]
then $T$ satisfies \eqref{eq:summing}
and by Pietsch's factorization theorem (cf.~\cite{MR1144277}) there exists a constant $C$ such that
\begin{equation}
\label{eq:pietsch}
\norm{T\varphi}_Y\leq C \left(\int_{\Omega}\abs{\varphi}^2d\mu\right)^{\frac{1}{2}},
\end{equation}	
where $\mu$ is a Borel probability measure on $\Omega=B_{(\ell^{\infty})^*}$, called Pietsch measure.  
Another concept going back to the 70's are Hardy spaces of martingales and their atomic decomposition, cf.~\cite{MR0447953,MR0312166,MR539351,MR602392,MR590628,MR584078}.

In our recent paper \cite{MulPent} we exhibited a connection between these two concepts. 
In the present work we further extend and exploit these newly found connections. We consider operators from $\ell^{\infty}$ into bi-parameter dyadic Hardy spaces $H^p(\delta^2)$ that act as multipliers on the Haar system. By the above, these multiplication operators are $2$-summing and satisfy therefore \eqref{eq:pietsch}.
In our main result (Theorem \ref{th:main1}) we determine explicit formulae for the Pietsch measure of these multiplication operators. We recall that for general absolutely summing operators the existence of a Pietsch measure is given by a Hahn-Banach argument and is therefore not constructive.
Let $\D$ be the set of dyadic intervals.     
Let $(f_{I \times J})_{I \times J\in \D\times \D}$ be a real sequence indexed by the dyadic rectangles $\D\times \D$. The space $H^p(\delta^2)$ consists of all functions
 \begin{equation*}
 \label{eq:func}
 f=\sum_{I \in \D}\sum_{J \in \D}f_{I \times J}h_{I \times J},
 \end{equation*} where $h_{I\times J}=h_I \otimes h_J$, which satisfy
 \begin{equation*}
 \norm{f}_{H^p(\delta^2)}=\left(\int_{[0,1]^2}\Big(\sum_{I\in \D}\sum_{J\in \D}f^2_{I\times J}1_{I\times J}\Big)^{\frac{p}{2}}\,dm\right)^{\frac{1}{p}} <\infty,
 \end{equation*}where $m$ denotes the Lebesgue measure on $[0,1]^2$. Every $f \in H^p(\delta^2)$ defines a multiplication operator of the form
 \begin{equation}
 \label{eq:mult1}
 \begin{split}
 \M_f\colon\ell^{\infty}(\D \times \D) &\rightarrow H^p(\delta^2)\\
  (\varphi_{I \times J}) &\mapsto \sum_{I}\sum_{J}\varphi_{I \times J}f_{I \times J} h_{I \times J}.  
 \end{split}
 \end{equation}
 For $1\leq p \leq 2$ the Hardy spaces $H^p(\delta^2)$ are of cotype $2$ and therefore the multiplication operators $\M_f$ are $2$-summing and have Pietsch measures. 
  In our main theorem (Theorem \ref{th:main1}) we use the atomic decomposition of $f \in H^p(\delta^2)$ to give explicit formulae for these Pietsch measures. In particular, we determine $\omega=(\omega_{I \times J})_{I \times J \in \D \times \D}$ with $\omega_{I \times J}\geq 0$ and $\sum \omega_{I \times J}\leq 1$ such that
 for all $\varphi \in \ell^{\infty}(\D\times \D)$ the following holds
 \begin{equation}
 \label{eq:pietsch2}
 \norm{\M_f(\varphi)}_{H^p(\delta^2)} \leq C \norm{f}_{H^p(\delta^2)}\bigg(\sum_{I \in \D}\sum_{J \in \D}\abs{\varphi_{I\times J}}^2\omega_{I\times J}\bigg)^{\frac{1}{2}}.
 \end{equation}
 The explicit formulae for $\omega$ are given by equation \eqref{eq:omega} in Section \ref{sec:mainth}. 
 Multiplication operators such as given in \eqref{eq:mult1} played an important role in the development of Banach space theory. See for instance the proof by Lindenstrauss and Pe{\l}czy{\'n}ski on the uniqueness of the unconditional basis in $\ell^1$ (\cite{MR0500056,MR0231188}).

 Bi-parameter Hardy spaces $H^p(\delta^2)$ may be regarded as vector-valued Hardy spaces $H^p_X$, where $X=H^p$. In \cite[Theorem 3.3, (3.19)]{MulPent} we obtained partially constructive formulae for the Pietsch measures of Haar multipliers on $\ell^\infty$ into the vector-valued Hardy spaces $H^p_X$. In the scalar-valued case, i.e.~$X=\RR$, we obtained fully constructive formulae for the Pietsch measures of the multiplication operators, see \cite[Theorem 3.1.]{MulPent}.
With this in mind, our present theorem (Theorem \ref{th:main1}) gives fully constructive results for a special class of vector-valued Hardy spaces and simultaneously we extend in a non-trivial way the scalar-valued one-parameter case to the bi-parameter case.   
\smallskip 

\noindent
\textit{Application.} The Banach spaces $H^p(\delta^2)$ form Banach lattices whose lattice structure is induced by their unconditional basis $(h_{I \times J})$ and they are related through Calder{\'o}n's product formula 
\begin{equation}
\label{eq:calderon}
H^p(\delta^2)=\left(H^1(\delta^2)\right)^{1-\theta}\left(H^2(\delta^2)\right)^{\theta}, \quad 0<\theta<1,\,\, \frac{1}{p}=1-\theta+\frac{\theta}{2}.
\end{equation}
This follows by combining the one-parameter identities (cf.~\cite[Theorem 8.2.]{MR1070037}) with Calder{\'o}n's theorem (\cite[Paragraph 13.6]{MR0167830}).
Therefore, Pisier's extrapolation statement (\cite[Theorem 2.10]{MR557371}) can be adapted to the family of $H^p(\delta^2)$ spaces and reads in this setting as follows
\begin{equation*}
\label{eq:pisier}
H^p(\delta^2)=(X_0)^{1-\theta}(H^2(\delta^2))^{\theta}, \quad \theta=2-\frac{2}{p}.
\end{equation*}
Here $X_0$ is the Banach lattice of all elements $x=\sum_{I}\sum_Jx_{I \times J}h_{I \times J}$ for which
\begin{equation}
\label{eq:X0norm}
\norm{x}_{X_0}=\sup{\left\{\biggnorm{\sum_I\sum_J\abs{x_{I \times J}}^{1-\theta}\abs{y_{I \times J}}^{\theta}h_{I \times J}}_{H^p(\delta^2)}\right\}}<\infty,
\end{equation}
where the supremum is taken over all $y=\sum_{I,J}y_{I \times J}h_{I\times J}$ with $\norm{y}_{H^2(\delta^2)}\leq 1$.
Specifically, \eqref{eq:pisier} asserts that given $f \in H^p(\delta^2)$ there is $x \in X_0$ and $y \in H^2(\delta^2)$ such that 
\begin{equation}
\label{eq:pisier2}
\abs{f}=\abs{x}^{1-\theta}\abs{y}^{\theta} \quad \text{and} \quad \norm{x}_{X_0}^{1-\theta}\norm{y}_{H^2(\delta^2)}^{\theta}\leq C\norm{f}_{H^p(\delta^2)}.
\end{equation}
Pisier shows in his proof that the weight $\omega=(\omega_{I \times J})$ given by equation \eqref{eq:pietsch2} yields factors for $f$. Hence, our explicit formulae for $\omega=(\omega_{I \times J})$ determined in Theorem \ref{th:main1} allow us to give constructive factors of $f$ satisfying \eqref{eq:pisier2}.

\section{Preliminaries}
\label{sec:prelim}
\subsection{Bi-parameter Hardy spaces $H^p(\delta^2)$}
The \textit{dyadic intervals} $\D$ on the unit interval are given by 
\begin{equation*}
\D=\left\{\left[2^{-n}(k-1),\,2^{-n}k\right[:\, n,k \in \NN_0,\, 0\leq k < 2^n\right\}
\end{equation*}
and the \textit{dyadic rectangles $\R$} on the unit square are given by $\R=\D\times \D.$
Let $\C \subseteq \R$ be a collection of dyadic rectangles. Then we denote by $\C^*$ the \textit{pointset} covered by the union of all dyadic rectangles in the collection $\C$. 
The space $\ell^{\infty}(\R)$ is the space of all sequences $\varphi=(\varphi_{IJ})_{I \times J \in \R}$, indexed by the dyadic rectangles, with $\norm{\varphi}_{\infty}=\sup_{I\times J\in \R}\abs{\varphi_{IJ}}<\infty.$
For every $I \in \D$ we define the $L^{\infty}$- normalised \textit{Haar function} $h_I$ to be $+1$ on the left half of $I$, $-1$ on the right half of $I$ and zero on $[0,1]\setminus I$.
The an-isotropic \textit{2D Haar system} $(h_{I\times J})_{I \times J \in \R}$ indexed by the dyadic rectangles is defined as follows 
\[h_{I \times J}(s,t):=h_I(s)h_J(t), \quad I,J \in \D, \, (s,t)\in [0,1]^2.\]

Let $(f_{IJ})_{I\times J \in \R}$ be a real sequence and $f=(f_{IJ})_{I\times J \in \R}$ the real vector indexed by the dyadic rectangles. The \textit{square function of $f$} is defined as follows
\begin{equation*}
S(f)(s,t)=\bigg(\sum_{I\times J \in \R}f_{IJ}^21_{I\times J}(s,t)\bigg)^{\frac{1}{2}},\quad (s,t) \in [0,1]^2.
\end{equation*}
The \textit{bi-parameter dyadic Hardy space $H^p(\delta^2)$}, $0<p\leq 2$, consists of vectors $f=(f_{IJ})_{I\times J \in \R}$ for which
\begin{equation*}
\norm{f}_{H^p(\delta^2)}=\left(\int_{[0,1]^2}S^p(f)(s,t)\, dm(s,t)\right)^{\frac{1}{p}} <\infty,
\end{equation*}where $m$ is the Lebesgue measure on $[0,1]^2$. 
Systematically we use the notation $\norm{f}_2=\norm{f}_{H^2(\delta^2)}$.
For convenience we identify $f=(f_{IJ})_{I \times J \in \R}\in H^p(\delta^2)$ with its formal Haar series 
\begin{equation}
\label{eq:haarseries}
f=\sum_{I \times J \in \R}f_{IJ}h_{I \times J}.
\end{equation}

\subsection{Atomic decomposition}
\label{sec:atdec}
Let $0<p\leq 2$ and $f \in H^p(\delta^2)$ with Haar expansion \eqref{eq:haarseries}.
For every $n\in \ZZ$ we define the set
\[F_n=\left\{(s,t) \in [0,1]^2:\, S(f)(s,t)>2^n\right\}\]
and the collection of dyadic rectangles
\[\R_n=\left\{I\times J \in \R:\,\abs{I\times J \cap F_n}>\frac{\abs{I\times J}}{2},\, \abs{I\times J \cap F_{n+1}}\leq \frac{\abs{I\times J}}{2}\right\}.\]
Then  $f=\sum_{n \in \ZZ}f_n$, where 
\[f_n=\sum_{I \times J \in \R_n}f_{IJ}h_{I\times J}\]
and the following inequalities hold
\begin{align}
&\label{eq:atdec}
			\norm{f}^p_{H^p(\delta^2)} \leq \sum_{n \in \mathbb{Z}}{\norm{f_n}^p_{H^p(\delta^2)}}\leq \sum_{n \in \mathbb{Z}}{\abs{\mathcal{R}_n^*}^{1-\frac{p}{2}}\norm{f_n}^p_{2}}\leq A_p \norm{f}^p_{H^p(\delta^2)}.
\end{align}
The family $(f_n, \mathcal{R}_n)_{n \in \ZZ}$ is called the \textit{atomic decomposition} of $u \in H^p(\delta^2)$. 
This decomposition originates from \cite{MR0312166,MR539351,MR590628,MR584078}.

Note that the right-hand side inequality in \eqref{eq:atdec} results from the following 
\begin{equation}
\label{eq:l2at}
\begin{split}
\norm{f_n}_{2}^2&=\int_{[0,1]^2} S^2(f_n)\,dm \leq 2\int_{[0,1]^2} S^2(f_n)1_{F_{n+1}^c} \, dm\leq 2\cdot 2^{2(n+1)}\abs{\R_n^*}\\
&\leq 8\cdot 2^{2n} \Big|\Big\{M_S(1_{F_n})>\frac{1}{2}\Big\}\Big|\leq  C \,2^{2n}\abs{F_n}.
\end{split}
\end{equation}
Here $M_S$ is the strong maximal operator (cf.~\cite{Jessen1935},\cite{MR1315539}) in $[0,1]^2$  given by
\[M_S(1_{F_n})(s,t)=\sup_{R \ni (s,t)}\frac{1}{\abs{R}}\int_R 1_{F_n}\,dm,\]
where the supremum is taken over all rectangles $R$ in $[0,1]^2$ with side length parallel to the axes. 
Boundedness estimates for the strong maximal operator (cf.~\cite{Jessen1935}) give rise to bi-parameter Fefferman-Stein strong maximal operator estimates (cf.~\cite[Theorem 1.]{MR0284802}). We exploit these Fefferman-Stein inequalities in the following form. 
\begin{lemma}
\label{le:bownik}
Fix $\varepsilon >0$. Suppose that for each $I \times J \in \R$ the subset $E_{I \times J} \subseteq I \times J$ is a measurable set with $\frac{\abs{E_{I\times J}}}{\abs{I\times J}}>\varepsilon$. Then for any $f \in H^p(\delta^2)$, $0<p<\infty$, with Haar expansion 
$f=\sum_{I \times J \in \R}f_{IJ}h_{I\times J}$,
the following holds
\begin{equation*}
\norm{f}_{H^p(\delta^2)}\leq C_p(\varepsilon)\, \biggnorm{\Big(\sum_{I \times J \in \R}\abs{f_{IJ}}^21_{E_{I\times J}}\Big)^{\frac{1}{2}}}_{L^p}.
\end{equation*}
\end{lemma}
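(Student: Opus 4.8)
The plan is to reduce the statement to a maximal-function estimate and then to treat the ranges $1<p<\infty$ and $0<p\le 1$ separately, the first by a vector-valued Fefferman--Stein inequality and the second by a good-$\lambda$ inequality. First I would record the only place where the density hypothesis enters. Fix $R=I\times J\in\R$. For every $(s,t)\in R$ the rectangle $R$ itself competes in the supremum defining $M_S$, so $M_S(1_{E_R})(s,t)\ge \abs{E_R}/\abs{R}>\varepsilon$, while for $(s,t)\notin R$ the quantity $1_R(s,t)$ vanishes. Hence $1_R\le \varepsilon^{-1}M_S(1_{E_R})$ pointwise, and since $\abs{f_{IJ}}$ is a constant, $\abs{f_{IJ}}1_R\le \varepsilon^{-1}M_S(\abs{f_{IJ}}1_{E_R})$. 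Squaring and summing over $\R$ gives the pointwise domination
\[
S(f)=\Big(\sum_{I\times J\in\R}\abs{f_{IJ}}^21_{I\times J}\Big)^{1/2}\le \frac1\varepsilon\Big(\sum_{I\times J\in\R}\big(M_S(\abs{f_{IJ}}1_{E_{I\times J}})\big)^2\Big)^{1/2}.
\]
Thus it suffices to bound the $L^p$-norm of the $\ell^2$-valued strong maximal function of the sequence $g_{I\times J}=\abs{f_{IJ}}1_{E_{I\times J}}$ by the $L^p$-norm of $\big(\sum \abs{g_{I\times J}}^2\big)^{1/2}=\big(\sum \abs{f_{IJ}}^21_{E_{I\times J}}\big)^{1/2}$.

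For $1<p<\infty$ this is immediate from the vector-valued (strong maximal) Fefferman--Stein inequality with inner exponent $2$, namely $\norm{(\sum_k (M_Sg_k)^2)^{1/2}}_{L^p}\le C_p\norm{(\sum_k \abs{g_k}^2)^{1/2}}_{L^p}$, which holds because $M_S$ is dominated by the composition of the one-parameter Hardy--Littlewood maximal operators in each variable. Combined with the previous display and $\norm{f}_{H^p(\delta^2)}=\norm{S(f)}_{L^p}$, this already yields the lemma with $C_p(\varepsilon)=\varepsilon^{-1}C_p$ on the range $1<p<\infty$, and a fortiori for $p>2$.

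For $0<p\le 1$ the vector-valued maximal inequality is no longer available, and I would instead establish a good-$\lambda$ inequality directly between the two square functions $S(f)$ and $s(f):=\big(\sum_{I\times J}\abs{f_{IJ}}^21_{E_{I\times J}}\big)^{1/2}$, that is, a bound of the form $\abs{\{S(f)>2\lambda,\ s(f)\le\gamma\lambda\}}\le \eta(\gamma)\abs{\{S(f)>\lambda\}}$ with $\eta(\gamma)\to 0$ as $\gamma\to0$; by the Burkholder--Gundy integration mechanism such an inequality upgrades at once to $\norm{S(f)}_{L^p}\le C_p(\varepsilon)\norm{s(f)}_{L^p}$ for every $0<p<\infty$. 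To prove it I would, at a fixed level $\lambda$, set $\Omega=\{S(f)>\lambda\}$ and enlarge it by the strong maximal operator to $\tilde\Omega=\{M_S(1_\Omega)>1/2\}$; the $L^2$-boundedness of $M_S$ gives $\abs{\tilde\Omega}\le C\abs{\Omega}$. Splitting the Haar series into the part supported on rectangles $R$ with $\abs{R\cap\Omega}>\abs{R}/2$ (which then satisfy $R\subseteq\tilde\Omega$) and the remaining part, and using the orthogonality identity $S(f)^2=S(f_{\mathrm{good}})^2+S(f_{\mathrm{bad}})^2$, reduces the matter to an $L^2$-mass estimate on $\tilde\Omega$; there the density hypothesis converts $\sum\abs{f_{IJ}}^2\abs{R}$ into $\varepsilon^{-1}\sum\abs{f_{IJ}}^2\abs{E_{I\times J}}=\varepsilon^{-1}\int s(f)^2$, exactly as in the computation \eqref{eq:l2at}.

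The main obstacle is this last, genuinely two-parameter good-$\lambda$ step for $p\le 1$. In one parameter one would simply stop at the maximal dyadic intervals inside $\Omega$, but in the product setting there is no Calder\'on--Zygmund stopping-time decomposition available; the exceptional set must instead be produced by the strong maximal operator and controlled through the Fefferman--Stein estimates quoted before the lemma. The delicate point is to obtain the decay $\eta(\gamma)\to0$ with constants independent of the particular configuration of the sets $E_{I\times J}$, relying only on the uniform density lower bound $\abs{E_{I\times J}}/\abs{I\times J}>\varepsilon$.
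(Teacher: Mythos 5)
Your reduction via $1_{I\times J}\le\varepsilon^{-1}M_S(1_{E_{I\times J}})$ and your treatment of $1<p<\infty$ are correct, and they coincide with the first step of the argument the paper actually relies on (the one-parameter proof of Frazier--Jawerth, adapted to the strong maximal operator $M_S$). The genuine gap is the range $0<p\le 1$, which is exactly the case the paper needs: in \eqref{eq:bow3} the lemma is applied with exponent $p$, where $0<p\le 2$. For this range you do not give a proof; you propose a good-$\lambda$ inequality between $S(f)$ and $s(f)$ and then explicitly leave open the decisive step (the decay $\eta(\gamma)\to0$), noting yourself that the product setting admits no Calder\'on--Zygmund stopping-time localization. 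That unresolved step \emph{is} the difficulty, so the case $p\le1$ remains unproved; moreover, the orthogonality splitting $S(f)^2=S(f_{\mathrm{good}})^2+S(f_{\mathrm{bad}})^2$ you invoke does not by itself localize $S(f_{\mathrm{bad}})$ off the enlarged set, which is where two-parameter good-$\lambda$ arguments typically break down.

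The missing idea makes the case distinction unnecessary: since $1_{E_{I\times J}}$ is an indicator, the maximal-function bound self-improves under small exponents. Fix $0<r<\min(p,2)$. On $I\times J$ we have $\varepsilon^{-1}M_S(1_{E_{I\times J}})\ge1$, so raising to the power $2/r\ge1$ and multiplying by $\abs{f_{IJ}}^2$ gives
\begin{equation*}
\abs{f_{IJ}}^2 1_{I\times J}\le \varepsilon^{-2/r}\abs{f_{IJ}}^2\left(M_S(1_{E_{I\times J}})\right)^{2/r}=\varepsilon^{-2/r}\left(M_S\big(\abs{f_{IJ}}^r 1_{E_{I\times J}}\big)\right)^{2/r},
\end{equation*}
and hence, summing over $\R$ and writing $g_{I\times J}=\abs{f_{IJ}}^r1_{E_{I\times J}}$,
\begin{equation*}
S(f)\le \varepsilon^{-1/r}\Big(\sum_{I\times J\in\R}\big(M_S g_{I\times J}\big)^{2/r}\Big)^{1/2}.
\end{equation*}
Taking $L^p$ norms and applying the vector-valued Fefferman--Stein inequality for $M_S$ at the exponent pair $(p/r,\,2/r)$ --- both strictly greater than $1$ by the choice of $r$, and legitimate for $M_S$ since the strong maximal operator is dominated by the iteration of the one-dimensional maximal operators, so the one-parameter inequality can be applied in each variable --- yields
\begin{equation*}
\norm{S(f)}_{L^p}\le \varepsilon^{-1/r}\Bignorm{\Big(\sum_{I\times J\in\R}\big(M_S g_{I\times J}\big)^{2/r}\Big)^{r/2}}_{L^{p/r}}^{1/r}\le C_{p,r}^{1/r}\,\varepsilon^{-1/r}\,\biggnorm{\Big(\sum_{I\times J\in\R}\abs{f_{IJ}}^2 1_{E_{I\times J}}\Big)^{1/2}}_{L^{p}},
\end{equation*}
where the last step used $\big(\sum_{I\times J} g_{I\times J}^{2/r}\big)^{r/2}=\big(\sum_{I\times J}\abs{f_{IJ}}^21_{E_{I\times J}}\big)^{r/2}$. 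This proves the lemma for every $0<p<\infty$ with $C_p(\varepsilon)=C\varepsilon^{-1/r}$; it is precisely how the one-parameter proof cited by the paper proceeds, and it adapts verbatim to the bi-parameter setting, so no good-$\lambda$ machinery is required.
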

Frazier and Jawerth (\cite[Theorem 2.7.]{MR1070037}) give a proof for the one-parameter version of this lemma. Their proof can be adapted to the setting above.

\subsection{Modified H\"older inequality}
See \cite[p.~61 (65.)]{MR0046395}.
Let $(\Omega, \Sigma, \mu)$ be a measure space and $r>1$ or $r<0$. Then for all measurable functions $f,g$ on $\Omega$
\begin{equation}
\label{eq:modholder}
\int_{\Omega} f^rg^{1-r}d\mu \geq \left(\int_{\Omega}f d\mu\right)^r \left(\int_{\Omega}g \,d\mu\right)^{1-r}.
\end{equation} 
\medskip

\section{The main Theorem}\label{sec:mainth}
Let $0<p\leq 2$.
Every $f \in H^p(\delta^2)$ defines a multiplication operator of the form
\begin{equation*}
\label{eq:mult2}
\M_f\colon \ell^{\infty}(\R)\rightarrow H^p(\delta^2),\,\, (\varphi_{IJ})\mapsto \sum_{I \times J \in \R} \varphi_{IJ}f_{IJ}h_{I \times J}
\end{equation*}
and clearly we have
\begin{equation*}
\norm{\M_f\colon\ell^{\infty}(\R)\rightarrow H^p(\delta^2)}\leq \norm{f}_{H^p(\delta^2)}.
\end{equation*} 
  Banach space theory as described in the introduction guarantees that these multiplication operators are $2$-summing and satisfy \eqref{eq:pietsch2}.
  In Theorem \ref{th:main1} we determine explicit formulae for the weights $\omega=(\omega_{I \times J})$ given in \eqref{eq:pietsch2}.
     Every multiplication operator $\M_f$ is induced by a function $f \in H^p(\delta^2)$. These functions admit an atomic decomposition $(f_n,\R_n)_{n \in \NN}$ satisfying the equations in \eqref{eq:atdec}. This is the input for our construction and the output is equation \eqref{eq:omega} determining $\omega$ explicitly. 

\begin{theorem}
\label{th:main1}
 Let $0<p\leq 2$ and $f \in H^p(\delta^2)$ with Haar expansion
	\[f=\sum_{I\times J \in \R}{f_{IJ}h_{I\times J}}
\]and atomic decomposition $(f_n, \R_n)_{n \in \ZZ}$. Then the sequence $\left(\omega_{IJ}\right)_{I\times J \in \R}$, defined by
\begin{equation}
\label{eq:omega}
\omega_{IJ}=\frac{1}{A_p\norm{f}^{p}_{H^p(\delta^2)} }\frac{\abs{\R_n^*}^{1-\frac{p}{2}}f_{IJ}^2\abs{I}\abs{J}}{\norm{f_n}^{2-p}_{2}}, \hspace{0.5 cm} I \times J \in \R_n,
\end{equation}satisfies
\begin{equation*}
	\sum_{I\times J \in \R}{\omega_{IJ}}\leq 1
\end{equation*}and for each $\varphi \in \ell^{\infty}(\R)$ the following inequality holds
\begin{equation*}
\label{eq:mainth}
	\norm{\M_f(\varphi)}_{H^p(\delta^2)}\leq A_p \|f\|_{H^p(\delta^2)}\Big(\sum_{I\times J \in \R}{\abs{\varphi_{IJ}}^2 \omega_{IJ}}\Big)^{\frac{1}{2}}.
\end{equation*}
\end{theorem}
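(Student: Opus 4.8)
The plan is to establish the two assertions separately, reducing both to the atomic decomposition chain \eqref{eq:atdec} together with one elementary identity: since the Haar system is orthogonal and $\norm{h_{I\times J}}_2^2=\abs{I}\abs{J}$, we have $\sum_{I\times J\in\R_n}f_{IJ}^2\abs{I}\abs{J}=\norm{f_n}_2^2$ for every $n$.

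First I would verify the normalization. Summing \eqref{eq:omega} over a fixed atom and using the identity above, the factor $\sum_{I\times J\in\R_n}f_{IJ}^2\abs{I}\abs{J}$ becomes $\norm{f_n}_2^2$, so that
\[
\sum_{I\times J\in\R_n}\omega_{IJ}=\frac{1}{A_p\norm{f}_{H^p(\delta^2)}^p}\,\abs{\R_n^*}^{1-\frac p2}\norm{f_n}_2^{p}.
\]
Summing over $n\in\ZZ$ and applying the right-hand inequality of \eqref{eq:atdec}, which states $\sum_{n}\abs{\R_n^*}^{1-p/2}\norm{f_n}_2^{p}\le A_p\norm{f}_{H^p(\delta^2)}^p$, immediately gives $\sum_{I\times J\in\R}\omega_{IJ}\le 1$.

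For the norm inequality I would split the image along the atoms, $\M_f(\varphi)=\sum_n g_n$ with $g_n=\sum_{I\times J\in\R_n}\varphi_{IJ}f_{IJ}h_{I\times J}$. Because every dyadic rectangle belongs to exactly one $\R_n$, the squares of the square functions add pointwise, $S^2(\M_f\varphi)=\sum_n S^2(g_n)$; since $0<p/2\le 1$ the map $t\mapsto t^{p/2}$ is subadditive, and integrating yields $\norm{\M_f\varphi}_{H^p(\delta^2)}^p\le\sum_n\norm{g_n}_{H^p(\delta^2)}^p$. Each $S(g_n)$ is supported on $\R_n^*$, so Hölder's inequality with exponents $2/p$ and $2/(2-p)$, applied on $\R_n^*$, gives $\norm{g_n}_{H^p(\delta^2)}^p\le\abs{\R_n^*}^{1-p/2}\norm{g_n}_2^{p}$, where $\norm{g_n}_2^2=\sum_{I\times J\in\R_n}\varphi_{IJ}^2f_{IJ}^2\abs{I}\abs{J}=:a_n$.

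The decisive step is a second application of Hölder's inequality, this time over $n$ and tailored to the shape of \eqref{eq:omega}. Writing
\[
\abs{\R_n^*}^{1-p/2}a_n^{p/2}=\Big(\frac{\abs{\R_n^*}^{1-p/2}a_n}{\norm{f_n}_2^{2-p}}\Big)^{p/2}\Big(\abs{\R_n^*}^{1-p/2}\norm{f_n}_2^{p}\Big)^{(2-p)/2}
\]
and applying Hölder with exponents $2/p$ and $2/(2-p)$ separates the sum into two factors: the factor $\sum_n\abs{\R_n^*}^{1-p/2}a_n\norm{f_n}_2^{-(2-p)}$, which by the definition \eqref{eq:omega} equals $A_p\norm{f}_{H^p(\delta^2)}^p\sum_{I\times J\in\R}\abs{\varphi_{IJ}}^2\omega_{IJ}$, and the factor $\sum_n\abs{\R_n^*}^{1-p/2}\norm{f_n}_2^{p}$, which \eqref{eq:atdec} bounds by $A_p\norm{f}_{H^p(\delta^2)}^p$. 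Collecting the powers of $A_p\norm{f}_{H^p(\delta^2)}^p$ produces the asserted inequality after taking $p$-th roots, with the final constant emerging as the appropriate power of $A_p$. The main obstacle is precisely the bookkeeping in this last step: the exponents of the final Hölder split, the normalizing constant in \eqref{eq:omega}, and the atomic constant $A_p$ from \eqref{eq:atdec} must be balanced so that the two factors recombine into the single clean constant on the right-hand side. Everything else reduces to orthogonality of the Haar system and the two elementary Hölder estimates; in particular the modified (reverse) Hölder inequality \eqref{eq:modholder} is not needed here.
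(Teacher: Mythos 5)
Your proposal is correct and follows essentially the same route as the paper's own proof: the same split of $\M_f(\varphi)$ along the atoms, the same subadditivity-plus-Hölder bound $\norm{g_n}_{H^p(\delta^2)}^p\le\abs{\R_n^*}^{1-p/2}\norm{g_n}_2^p$, the same second Hölder split over $n$ with exponents $2/p$ and $2/(2-p)$, and the same use of the right-hand inequality of \eqref{eq:atdec} for both the normalization $\sum\omega_{IJ}\le1$ and the second Hölder factor. Your closing remark about the constant being ``the appropriate power of $A_p$'' is also apt, since the paper's own final display yields $A_p^{1/p}$ after taking $p$-th roots, though it is stated as $A_p$ in the theorem.
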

\begin{proof}
Note that the left-hand side inequality of (\ref{eq:atdec}) depends only on the fact that $(\R_n)_{n\in \ZZ}$ forms a partition of $\R$. Hence, for all $\varphi=(\varphi_{IJ}) \in\ell^{\infty}(\R)$ the following estimate holds
\begin{equation*}
	\begin{split}
	\biggnorm{\sum_{I\times J \in \R}{\varphi_{IJ} f_{IJ} h_{I\times J}}}^p_{H^p(\delta^2)} &=\biggnorm{\sum_{n \in\ZZ}\sum_{I \times J \in \R_n}{\varphi_{IJ} f_{IJ} h_{I \times J}}}^p_{H^p(\delta^2)}\\
	& \leq \sum_{n \in\ZZ}{\biggnorm{\sum_{I \times J \in \R_n}{\varphi_{IJ} f_{IJ} h_{I \times J}}}^p_{2}\abs{\R_n^*}^{1-\frac{p}{2}}}\\
	&= \sum_{n \in\ZZ}{\biggnorm{\sum_{I \times J \in \R_n}{\varphi_{IJ} \frac{f_{IJ}}{\norm{f_n}_{2}} h_{I \times J}}}^p_{2}\norm{f_n}^p_{2}\abs{\R_n^*}^{1-\frac{p}{2}}}.
	\end{split}
\end{equation*}
With
\begin{align*}
\biggnorm{\sum_{I \times J \in \R_n}{\varphi_{IJ} \frac{f_{IJ}}{\norm{f_n}_{2}} h_{I \times J}}}^p_{2} &= \Big(\sum_{I \times J \in \R_n}{\varphi_{IJ}^2\frac{f_{IJ}^2}{\norm{f_n}_{2}^2}\abs{I}\abs{J}}\Big)^{\frac{p}{2}}
\end{align*}
and H\"older's inequality we get
\begin{equation*}
\begin{split}
&\biggnorm{\sum_{I \times J \in \R}{\varphi_{IJ} f_{IJ} h_{I \times J}}}^p_{H^p(\delta^2)} \leq \sum_{n \in\ZZ}{\Big(\sum_{I \times J \in \R_n}{\varphi_{IJ}^2\frac{f_{IJ}^2}{\norm{f_n}_{2}^2}\abs{I}\abs{J}}\Big)^{\frac{p}{2}}\norm{f_n}^p_{2}\abs{\R_n^*}^{1-\frac{p}{2}}}\\
&\qquad\quad\,\,\,=\sum_{n \in\ZZ}{\Big(\sum_{I \times J \in \R_n}{\varphi_{IJ}^2\frac{f_{IJ}^2}{\norm{f_n}_{2}^2}\abs{I}\abs{J}\norm{f_n}^p_{2}\abs{\R_n^*}^{1-\frac{p}{2}}}\Big)^{\frac{p}{2}}\Big(\norm{f_n}^p_{2}\abs{\R_n^*}^{1-\frac{p}{2}}\Big)^{1-\frac{p}{2}}}\\
&\qquad\quad\,\,\,\leq \Big(\sum_{n \in\ZZ}\sum_{I \times J \in \R_n}{\varphi_{IJ}^2\frac{f_{IJ}^2}{\norm{f_n}^{2-p}_{2}}\abs{I}\abs{J}\abs{\R_n^*}^{1-\frac{p}{2}}}\Big)^{\frac{p}{2}}\Big(\sum_{n \in\ZZ}\norm{f_n}^p_{2}\abs{\R_n^*}^{1-\frac{p}{2}}\Big)^{1-\frac{p}{2}}.
\end{split}
\end{equation*}
\begin{align*}
&\biggnorm{\sum_{I \times J \in \R}{\varphi_{IJ} f_{IJ} h_{I \times J}}}_{H^p(\delta^2)}^p\\
&\qquad\qquad\leq A_p^{1-\frac{p}{2}}\norm{f}^{p(1-\frac{p}{2})}_{H^p(\delta^2)}\Big(\sum_{n\in \ZZ}\sum_{I \times J \in \R_n}{\varphi_{IJ}^2\frac{f_{IJ}^2}{\norm{f_n}^{2-p}_{2}}\abs{I}\abs{J}\abs{\R_n^*}^{1-\frac{p}{2}}}\Big)^{\frac{p}{2}}\\
&\qquad\qquad= A_p\norm{f}^p_{H^p(\delta^2)}\Big(\sum_{n \in\ZZ}\sum_{I \times J \in \R_n}{\varphi_{IJ}^2\frac{f_{IJ}^2}{\norm{f_n}^{2-p}_{2}\norm{f}^{p}_{H^p(\delta^2)}A_p}\abs{I}\abs{J}\abs{\R_n^*}^{1-\frac{p}{2}}}\Big)^{\frac{p}{2}}.
\end{align*}
Recall that
\begin{equation}
\label{eq:pr3}
\norm{f_n}_{2}^2=\sum_{I \times J \in \R_n}{f_{IJ}^2\abs{I}\abs{J}}.	
\end{equation}
By the right-hand side inequality in equation (\ref{eq:atdec}) and by equation (\ref{eq:pr3}) we obtain for the sequence $\left(\omega_{IJ}\right)_{I \times J \in \R}$, defined by
	\[\omega_{IJ}=\frac{1}{A_p\norm{f}^{p}_{H^p(\delta^2)} }\frac{\abs{\R_n^*}^{1-\frac{p}{2}}f_{IJ}^2\abs{I}\abs{J}}{\norm{f_n}^{2-p}_{2}}, \hspace{0.5 cm} I \times J \in \R_n,
\]the following estimate
\begin{align*}
\sum_{I \times J \in \R}{\omega_{IJ}}&=\frac{1}{A_p\norm{f}^p_{H^p(\delta^2)}}\sum_{n \in\ZZ}\sum_{I \times J \in \R_n}{\frac{\abs{\R_n^*}^{1-\frac{p}{2}}f_{IJ}^2\abs{I}\abs{J}}{\norm{f_n}^{2-p}_{2}}}\\
&=\frac{1}{A_p\norm{f}^p_{H^p(\delta^2)}}\sum_{n\in\ZZ}{\abs{\R_n^*}^{1-\frac{p}{2}}\norm{f_n}^p_{2}}\,\leq 1.
\end{align*}
\end{proof}

\section{Another application of the atomic decomposition}
Pisier's extrapolation lattice $X_0$ defined in \eqref{eq:X0norm} is known to coincide with $H^1(\delta^2)$. This follows by a specialisation of a general theorem of Cwikel and Nilsson (see \cite{MR1996919}).
Their extrapolation method is applicable, since $H^p(\delta^2)$ spaces are related through Calderon's product formula (cf.~equation \eqref{eq:calderon}).
The space $X_0$ is of particular importance to our work in this paper. Hence, we take the opportunity to complement the work of \cite{MR1996919,MR1070037} with a direct argument based on the atomic decomposition of $H^p(\delta^2)$. We build our strategy by exploiting the formulae used by \cite{MR2157745, MR2183484, zbMATH06162617} for similar purposes. In particular, we refer to Bownik's paper \cite{zbMATH06162617} for the formula \eqref{eq:gfunc} and the idea of using Lemma \ref{le:bownik} in the proof of the following theorem. 
\begin{theorem}
\label{th:int}
Let $f \in H^p(\delta^2)$, $0<p\leq 2$, with Haar expansion $f=\sum f_{IJ} h_{I \times J}$. Then for $0<\theta<1$ and $q$ given by 
\begin{equation}
\label{eq:intq}
\frac{1}{q}=\frac{1-\theta}{p}+\frac{\theta}{2},
\end{equation}
the following holds:
\begin{equation*}
c_p \norm{f}_{H^p(\delta^2)}^{1-\theta}\leq \sup{\left\{\biggnorm{\sum_{I\times J \in \R}\abs{f_{IJ}}^{1-\theta}\abs{g_{IJ}}^{\theta}h_{I \times J}}_{H^q(\delta^2)}\right\}}\leq \norm{f}_{H^p(\delta^2)}^{1-\theta},
\end{equation*}where the supremum is taken over all functions $g=\sum g_{IJ}h_{I\times J}$ with $\norm{g}_{2}\leq 1$. 
\end{theorem}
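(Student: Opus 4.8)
The plan is to prove the two inequalities separately: the right-hand (upper) bound is soft, while the left-hand (lower) bound is the substantial part, and is where the atomic decomposition enters. For the upper bound, fix $g$ with $\norm{g}_2\le 1$ and set $G=\sum_{I\times J}\abs{f_{IJ}}^{1-\theta}\abs{g_{IJ}}^{\theta}h_{I\times J}$. First I would apply H\"older's inequality for sequences with exponents $1/(1-\theta)$ and $1/\theta$ to the defining sum of $S(G)^2$, obtaining the pointwise bound $S(G)\le S(f)^{1-\theta}S(g)^{\theta}$. Raising to the power $q$, integrating over $[0,1]^2$, and applying the ordinary H\"older inequality with the exponents $\tfrac{p}{q(1-\theta)}$ and $\tfrac{2}{q\theta}$ — which are $\ge 1$ and conjugate precisely because of \eqref{eq:intq} — yields $\norm{G}_{H^q(\delta^2)}^q\le\norm{f}_{H^p(\delta^2)}^{q(1-\theta)}\norm{g}_2^{q\theta}$, hence $\norm{G}_{H^q(\delta^2)}\le\norm{f}_{H^p(\delta^2)}^{1-\theta}$. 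Taking the supremum gives the right-hand inequality with constant $1$.

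For the lower bound it suffices to exhibit a single admissible $g$, and I would take the one dictated by the Pietsch weight $\omega_{IJ}$ of \eqref{eq:omega}, namely the $g$ with $g_{IJ}^2\abs{I}\abs{J}=\omega_{IJ}$, i.e.
\[
g_{IJ}=\frac{\abs{\R_n^*}^{(2-p)/4}}{A_p^{1/2}\,\norm{f}_{H^p(\delta^2)}^{p/2}\,\norm{f_n}_2^{(2-p)/2}}\,\abs{f_{IJ}},\qquad I\times J\in\R_n .
\]
Then $\norm{g}_2^2=\sum_{I\times J}\omega_{IJ}\le 1$ by Theorem \ref{th:main1}, so $g$ is admissible, and a direct computation gives $G_{IJ}:=\abs{f_{IJ}}^{1-\theta}\abs{g_{IJ}}^{\theta}=\mu_n\abs{f_{IJ}}$ on $\R_n$, with $\mu_n^{-2}=A_p^{\theta}\,\norm{f}_{H^p(\delta^2)}^{p\theta}\,\norm{f_n}_2^{\theta(2-p)}\abs{\R_n^*}^{-\theta(2-p)/2}$.

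The heart of the argument is to control $\mu_n^{-2}$ uniformly. From the $L^2$-atom estimate \eqref{eq:l2at} one has $\norm{f_n}_2^2\le 8\cdot 2^{2n}\abs{\R_n^*}$, whence $\mu_n^{-2}\le C_p\,\norm{f}_{H^p(\delta^2)}^{p\theta}\,2^{n\theta(2-p)}$; since $S(f)>2^n$ on $F_n$ and $\theta(2-p)\ge 0$, this gives $\mu_n^{-2}\le C_p\,\norm{f}_{H^p(\delta^2)}^{p\theta}\,S(f)^{\theta(2-p)}$ on $F_n$. Now put $E_{IJ}=(I\times J)\cap F_n$ for $I\times J\in\R_n$; by the definition of $\R_n$ we have $\abs{E_{IJ}}>\abs{I\times J}/2$, so Lemma \ref{le:bownik} applies with $\varepsilon=\tfrac12$. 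Substituting $f_{IJ}^2=\mu_n^{-2}G_{IJ}^2$ together with the bound on $\mu_n^{-2}$ into the restricted square function, I obtain
\[
\norm{f}_{H^p(\delta^2)}\le C_p\Bignorm{\Big(\sum_{I\times J}f_{IJ}^2 1_{E_{IJ}}\Big)^{1/2}}_{L^p}\le C_p\,\norm{f}_{H^p(\delta^2)}^{p\theta/2}\,\Bignorm{S(f)^{\theta(2-p)/2}S(G)}_{L^p}.
\]
A final H\"older inequality with the conjugate exponents $\tfrac{2}{\theta(2-p)}$ and $\tfrac{q}{p}$ (for $p<2$; the case $p=2$ is immediate, as then $q=2$ and one may take $g=f/\norm{f}_2$) bounds the last factor by $\norm{f}_{H^p(\delta^2)}^{\theta(2-p)/2}\norm{G}_{H^q(\delta^2)}$. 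Collecting the powers of $\norm{f}_{H^p(\delta^2)}$ — which add to exactly $\tfrac{p\theta}{2}+\tfrac{\theta(2-p)}{2}=\theta$ — yields $\norm{f}_{H^p(\delta^2)}^{1-\theta}\le c_p^{-1}\norm{G}_{H^q(\delta^2)}$, the desired left-hand inequality.

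The main obstacle is the lower bound, and within it the uniform estimate for $\mu_n^{-2}$: choosing the correct weight $g$ (the one carried by the Pietsch measure of $\M_f$) is exactly what converts the $n$-dependent normalisations $\norm{f_n}_2$ and $\abs{\R_n^*}$ into a single pointwise factor $S(f)^{\theta(2-p)}$ on the level set $F_n$, after which Lemma \ref{le:bownik} and one more H\"older inequality close the chain. The exponent bookkeeping — verifying that both pairs are conjugate and that the residual powers of $\norm{f}_{H^p(\delta^2)}$ sum to $\theta$ — is routine once \eqref{eq:intq} is invoked.
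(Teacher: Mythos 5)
Your proof is correct, and while it keeps the paper's overall skeleton for the lower bound --- atomic decomposition, Lemma \ref{le:bownik} applied to the sets $E_{I\times J}=(I\times J)\cap F_n$ (admissible with $\varepsilon=\tfrac12$ by the definition of $\R_n$), and the level-set bound $S(f)>2^n$ on $F_n$ --- your implementation differs from the paper's in two genuine ways. First, the choice of the factor $g$: the paper uses Bownik's formula \eqref{eq:gfunc}, $\abs{g_{IJ}}=2^{-\frac n2(2-p)}\abs{f_{IJ}}$, whose admissibility is not automatic (one only gets $\norm{g}_2^2\le c_p\norm{f}_{H^p(\delta^2)}^p$ via \eqref{eq:l2at}, so a final rescaling/homogeneity step is needed), whereas you take $g$ directly from the Pietsch weights of Theorem \ref{th:main1}, $g_{IJ}^2\abs{I}\abs{J}=\omega_{IJ}$, so that $\norm{g}_2\le 1$ is immediate; you then use \eqref{eq:l2at} in the other direction, to cancel the $\norm{f_n}_2$ and $\abs{\R_n^*}$ factors in $\mu_n^{-2}$ and reduce to the same geometric weight $2^{n\theta(2-p)}\le S(f)^{\theta(2-p)}$ on $F_n$. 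Second, the closing step: the paper invokes the modified (reverse) H\"older inequality \eqref{eq:modholder} to merge two integrals into $\int h^q S^{p-q}(f)\,dm$, while you use only the ordinary H\"older inequality with the conjugate pair $\tfrac{2}{\theta(2-p)}$, $\tfrac qp$ (whose conjugacy is exactly \eqref{eq:intq}) and divide by $\norm{f}_{H^p(\delta^2)}^{\theta}$ at the end; your exponent bookkeeping checks out. What your route buys: it avoids the less standard reverse H\"older inequality, and, more interestingly, it makes explicit the link announced in the introduction between the Pietsch measure of $\M_f$ and Pisier's factorization --- the witness $g$ in the extrapolation statement is literally the density of the Pietsch weight \eqref{eq:omega}. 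What the paper's route buys: a Section~4 that is independent of Theorem \ref{th:main1}, and lighter bookkeeping, since Bownik's $g$ carries only the scalar $2^{-n(2-p)/2}$ rather than $\norm{f_n}_2$ and $\abs{\R_n^*}$. Two cosmetic points you should add: adopt the convention $\omega_{IJ}=0$ (hence $g_{IJ}=0$) on any $\R_n$ with $\norm{f_n}_2=0$, and dispose of the trivial case $f=0$ before dividing by $\norm{f}_{H^p(\delta^2)}^{\theta}$.
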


\begin{proof}
We start with the proof of the right-hand side inequality. Let 
\[h=\sum_{I \times J \in \R}\abs{f_{IJ}}^{1-\theta}\abs{g_{IJ}}^{\theta}h_{I\times J}.\]
Then, by applying H\"older's inequality for sequence spaces with $1-\theta+\theta=1$, we obtain the following inequality for the square functions
\begin{align*}
S^q(h)\leq S^{q(1-\theta)}(f)\,S^{q\theta}(g). 
\end{align*}
Integrating over $[0,1]^2$ and applying H\"older's inequality with $\frac{q(1-\theta)}{p}+\frac{q\theta}{2}=1$ yields
\begin{equation*}
\norm{h}_{H^q(\delta^2)}\leq \norm{f}_{H^p(\delta^2)}^{1-\theta}\,\,\norm{g}_{2}^{\theta}.
\end{equation*}

For the left-hand side inequality we show that for every $f \in H^p(\delta^2)$ there exists a function $g \in H^2(\delta^2)$ such that $\norm{g}_{2}^2\leq c_p\norm{f}^p_{H^p(\delta^2)}$ and 
\begin{align}
\label{eq:bow0}
 \biggnorm{\sum_{I\times J \in \R}\abs{f_{IJ}}^{1-\theta}\abs{g_{IJ}}^{\theta}h_{I \times J}}_{H^q(\delta^2)}^q&\geq C_p\, \norm{f}^p_{H^p(\delta^2)}.
\end{align}
Let $(f_n,\R_n)_{n \in \ZZ}$ be the atomic decomposition of $f \in H^p(\delta^2)$. 
Let $g=\sum g_{IJ}h_{I \times J}$, where
\begin{equation}
\label{eq:gfunc}
\abs{g_{IJ}}=2^{-\frac{n}{2}(2-p)}\abs{f_{IJ}}, \quad I \times J \in \R_n. 
\end{equation} Then, by equation \eqref{eq:l2at}, we have
\begin{equation}
\label{eq:lhs}
\begin{split}
\norm{g}^2_{2}&=\sum_{n \in \ZZ}2^{-n(2-p)}\norm{f_n}_2^2\leq C\sum_{n \in \ZZ}2^{-n(2-p)}2^{2n}\abs{F_n}=C\sum_{n \in \ZZ}2^{np}\abs{F_n}\\
&\leq c_p\norm{f}_{H^p(\delta^2)}^p.
\end{split}
\end{equation}

To prove equation \eqref{eq:bow0} we use Lemma \ref{le:bownik} with sets $E_{I \times J}=I\times J \cap F_n$, for $I \times J \in \R_n$ and obtain
\begin{equation}
\label{eq:bow3}
\begin{split}
\norm{f}_{H^p(\delta^2)}^p&=\left(\int_{[0,1]^2}S^p(f)\,dm\right)^{\frac{q}{p}}\left(\int_{[0,1]^2} S^p(f)\,dm\right)^{1-\frac{q}{p}}\\
&\leq C_p^q\left(\int_{[0,1]^2}\Big(\sum_{I\times J}\abs{f_{IJ}}^21_{E_{I\times J}}\Big)^{\frac{p}{2}}\,dm\right)^{\frac{q}{p}}\left(\int_{[0,1]^2}S^p(f)\,dm\right)^{1-\frac{q}{p}}.
\end{split}
\end{equation}
Let $h=\left(\sum_{I\times J \in \R}\abs{f_{IJ}}^21_{E_{I\times J}}\right)^{\frac{1}{2}}$.
Note that by the modified H\"older inequality (cf.~equation \eqref{eq:modholder}) we have
\begin{equation}
\label{eq:revholder}
\left(\int_{[0,1]^2} h^p\,dm\right)^{\frac{q}{p}}\left(\int_{[0,1]^2} S^p(f)\,dm\right)^{1-\frac{q}{p}}\leq \int_{[0,1]^2} h^qS^{p-q}(f)\,dm.
\end{equation}
Combining equation \eqref{eq:bow3} and \eqref{eq:revholder} yields
\begin{equation}
\label{eq:bow2}
\begin{split}
\norm{f}^p_{H^p(\delta^2)}&\leq C_p^q\int_{[0,1]^2}\Big(\sum_{I\times J \in \R}\abs{f_{IJ}}^21_{E_{I\times J}}\Big)^{\frac{q}{2}}S^{p-q}(f)\, dm\\
&=C_p^q\int_{[0,1]^2}\Big(\sum_{n\in \ZZ}\sum_{I\times J \in \R_n}\abs{f_{IJ}}^21_{I\times J}1_{F_n}\Big)^{\frac{q}{2}}S^{p-q}(f)\, dm.
\end{split}
\end{equation}
We know that $S(f)1_{F_n}>2^{n}1_{F_n}$. Since $q>p$, it follows that  
\begin{equation}
\label{eq:S2}
S(f)^{p-q}1_{F_n}<2^{-n(q-p)}1_{F_n}.
\end{equation}
Equation \eqref{eq:intq} gives the identity $q-p=\frac{q\theta}{2}(2-p)$. Hence, putting equation \eqref{eq:S2} into equation \eqref{eq:bow2} yields
\begin{equation}
\label{eq:rhs}
\begin{split}
\norm{f}_{H^p(\delta^2)}^p&\leq C_p^q\int_{[0,1]^2}\Big(\sum_{n\in \ZZ}2^{-n\theta(2-p)}\sum_{I\times J \in \R_n}\abs{f_{IJ}}^21_{I\times J}1_{F_n}\Big)^{\frac{q}{2}}\, dm\\
&\leq C_p^q\int_{[0,1]^2}\Big(\sum_{n\in \ZZ}2^{-n\theta(2-p)}\sum_{I\times J \in \R_n}\abs{f_{IJ}}^21_{I \times J}\Big)^{\frac{q}{2}}\, dm\\
&=C_p^q\int_{[0,1]^2}\Big(\sum_{I\times J \in \R}\abs{f_{IJ}}^{2(1-\theta)}\abs{g_{IJ}}^{2\theta}1_{I \times J}\Big)^{\frac{q}{2}}\, dm\\
&=C_p^q\biggnorm{\sum_{I \times J \in \R}\abs{f_{IJ}}^{1-\theta}\abs{g_{IJ}}^{\theta}h_{I \times J}}_{H^q(\delta^2)}^q.
\end{split}
\end{equation}

Summarizing equations \eqref{eq:lhs} and \eqref{eq:rhs} yields
\begin{align*}
\norm{f}_{H^p(\delta^2)}^{1-\theta}\norm{g}_{2}^{\theta}\leq c_{p}^{\frac{\theta}{2}}\norm{f}_{H^p(\delta)^2}^{\frac{p}{q}}\leq C_{p} \,\biggnorm{\sum_{I \times J \in \R}\abs{f_{IJ}}^{1-\theta}\abs{g_{IJ}}^{\theta}h_{I \times J}}_{H^q(\delta^2)}.
\end{align*}
\end{proof}

\subsection*{Acknowledgements}
We would like to thank M.~Bownik for helpful discussions during the preparation of this paper. 

This research has been supported by the Austrian Science foundation (FWF) Pr.Nr.P22549, Pr.Nr.P23987 and Pr.Nr.P28352.
\bibliographystyle{alpha}
\bibliography{bib}
\end{document}